\newtheorem{theorem}{Theorem}[section]
\newtheorem{corollary}[theorem]{Corollary}
\newtheorem{lemma}[theorem]{Lemma}
\newtheorem{prop}[theorem]{Proposition}
\theoremstyle{definition}
\theoremstyle{remark}
\newtheorem{rem}[theorem]{Remark}
\numberwithin{equation}{section}
\begin{document}
\title{Eigenvalue extensions of Bohr's inequality }
\author[Jagjit Singh Matharu, Mohammad Sal Moslehian, Jaspal Singh Aujla]{Jagjit Singh Matharu$^1$, Mohammad Sal
Moslehian$^2$ and Jaspal Singh Aujla$^1$}
\address{$^{1}$ Department of Mathematics, National Institute of Technology,
Jalandhar 144011, Punjab, India.}
\email{matharujs@yahoo.com, aujlajs@nitj.ac.in}
\address{$^{2}$Department of Pure Mathematics, Center of Excellence in
Analysis on Algebraic Structures (CEAAS), Ferdowsi University of Mashhad, P.
O. Box 1159, Mashhad 91775, Iran.}
\email{moslehian@ferdowsi.um.ac.ir and moslehian@ams.org}
\subjclass[2010]{Primary 47A30; Secondary 47B15; 15A60.}
\keywords{Convex function; weak majorization; Unitarily invariant norm;
completely positive map; Bohr inequality; eigenvalue}

\begin{abstract}
We present a weak majorization inequality and apply it to prove eigenvalue
and unitarily invariant norm extensions of a version of the Bohr's inequality due to
Vasi\'c and Ke\v{c}ki\'c.
\end{abstract}

\maketitle

\section{Introduction and Preliminaries}

Let $\mathcal{M}_{n}$ denote the $C^{\ast}$-algebra of $n\times n$ complex
matrices and let $\mathcal{H}_{n}$ be the set of all Hermitian matrices in $%
\mathcal{M}_{n}$. We denote by $\mathcal{H}_{n}(J)$ the set of all
Hermitian matrices in $\mathcal{M}_{n}$ whose spectra are contained in an
interval $J \subseteq \mathbb{R}$. By $I_{n}$ we denote the identity matrix of $\mathcal{M}_{n}$.
For matrices $A, B \in \mathcal{H}_{n}$ the order relation $A \leq B$ means
that $\langle Ax, x\rangle\leq \langle Bx,x\rangle$ for all $x \in \mathbb{C}%
^n$. In particular, if $0 \leq A$, then $A$ is called positive semidefinite.

 For $A\in \mathcal{H}_{n}$, we shall always denote by $\lambda
_{1}(A)\geq \lambda _{2}(A)\geq \cdots \geq \lambda _{n}(A)$ the
eigenvalues of $A$ arranged in the decreasing order with their
multiplicities counted. By $s_{1}(A)\geq s_{2}(A)\geq \cdots \geq s_{n}(A)$,
we denote the eigenvalues of $|A|=(A^{\ast }A)^{1/2}$, i.e., the singular values
of $A$. A norm $\left\vert \left\vert \left\vert \cdot \right\vert
\right\vert \right\vert $ on $\mathcal{M}_{n}$ is said to be unitarily
invariant if $\left\vert \left\vert \left\vert UAV\right\vert \right\vert
\right\vert =\left\vert \left\vert \left\vert A\right\vert \right\vert
\right\vert $ for all $A\in \mathcal{M}_{n}$ and all unitary matrices $%
U,V\in \mathcal{M}_{n}$. The Ky Fan norms, defined as $\Vert A\Vert
_{(k)}=\sum_{j=1}^{k}s_{j}(A)$ for $k=1,2,\cdots ,n$, provide a significant
family of unitarily invariant norms. The Ky Fan dominance theorem states
that $\Vert A\Vert _{(k)}\leq \Vert B\Vert _{(k)}\,\,(1\leq k\leq n)$ if and
only if $\left\vert \left\vert \left\vert A\right\vert \right\vert
\right\vert \leq \left\vert \left\vert \left\vert B\right\vert \right\vert
\right\vert $ for all unitarily invariant norms $\left\vert \left\vert \left\vert \cdot \right\vert \right\vert
\right\vert $. For
more information on unitarily invariant norms the reader is referred to \cite%
{bhatiaa}.

The classical Bohr's inequality \cite{bohr} states that for any $z,w\in
\mathbb{C}$ and for $p,q>1$ with $\displaystyle\frac{1}{p}+\frac{1}{q}=1$,
\begin{equation*}
\vert z+w\vert^{2}\leq p\vert z\vert ^{2}+q\vert w\vert ^{2}
\end{equation*}%
with equality if and only if $w=(p-1)z$. Several operator generalizations of
the Bohr inequality have been obtained by some authors (see \cite{A-B-P,
chan, cheung, hirzallah, M-R, ZHA, Z-F}). In \cite{vasic}, Vasi\'c and Ke%
\v{c}ki\'c gave an interesting generalization of the inequality of the
following form
\begin{equation}
\left\vert \sum_{j=1}^{m}z_{j}\right\vert ^{r}\leq \left( \sum_{j=1}^{m}p
_{j}^{\frac{1}{1-r}}\right) ^{r-1}\sum_{j=1}^{m}p _{j}\vert z_{j}\vert ^{r},
\label{vasin}
\end{equation}
where $z_{j}\in \mathbb{C},~p _{j}>0,~r>1$. See also \cite{M-P-P} for an
operator extension of this inequality.

 In this paper, we aim to give a weak majorization inequality and
apply it to prove eigenvalue and unitarily invariant norm extensions of \eqref{vasin}.

%%%%%%%%%%%%%%%%%%%%%%%%%%%%%%%%%%%%%%%%%%%%%%%%%%%%%%%%%%%%%%%%%%%%%%%%%%%%%%%%%%%%%%%%%%%%%%%%%%%%%%%%%%%%%%%%%%%%%%%%%%%%%%%%%%%%%%%
%%%%%%%%%%%%%%%%%%%%%%%%%%%%%%%%%%%%%%%%%%%%%%%%%%%%%%%%%%%%%%%%%%%%%%%%%%%%%%%%%%%%%%%%%%%%%%%%%%%%%%%%%%%%%%%%%%%%%%%%%%%%%%%%%%%%%%%
%%%%%%%%%%%%%%%%%%%%%%%%%%%%%%%%%%%%%%%%%%%%%%%%%%%%%%%%%%%%%%%%%%%%%%%%%%%%%%%%%%%%%%%%%%%%%%%%%%%%%%%%%%%%%%%%%%%%%%%%%%%%%%%%%%%%%%%
%%%%%%%%%%%%%%%%%%%%%%%%%%%%%%%%%%%%%%%%%%%%%%%%%%%%%%%%%%%%%%%%%%%%%%%%%%%%%%%%%%%%%%%%%%%%%%%%%%%%%%%%%%%%%%%%%%%%%%%%%%%%%%%%%%%%%%%

\section{Generalization of Bohr's inequality}

\label{Bohr} In this section we shall prove a matrix analogue of the
inequality (\ref{vasin}). We begin with the definition of the positive
linear map.

A ${\ast}$-subspace of $\mathcal{M}_{n}$ containing $I_{n}$ is called an
operator system. A map $\Phi: \mathcal{S}\subseteq \mathcal{M}_{n} \to
\mathcal{T}\subseteq \mathcal{M}_{m}$ between two operator systems is called
positive if $\Phi (A)\geq 0$ whenever $A\geq 0$, and is called unital if $%
\Phi (I_{n})=I_{m}$. Let $[A_{ij}]_{k},$ $A_{ij}\in \mathcal{M}_{n},$ $1\leq
i,j\leq k,$ denote the $k\times k$ block matrix. Then each map $\Phi $ from
$\mathcal{S}$ to $\mathcal{T}$ induces a map $\Phi _{k}$ from $\mathcal{M}%
_{k}( \mathcal{S})$ to $\mathcal{M}_{m}(\mathcal{T})$ defined by $\Phi
_{k}\left( [A_{ij}]_{k}\right) =\left[ \Phi (A_{ij})\right] _{k}.$ We say
that $\Phi$ is completely positive if the maps $\Phi _{k}$ are positive for
all $k=1, 2, \cdots$.

To prove our main result we need Lemma \ref{lem2} which is of independent
interest. To achieve it, we, in turn, need some known lemmas.
%%%%%%%%%%%%%%%%%%%%%%%%%%%%%%%%%%%%%%%%%%%%%%%%%%%%%%%%%%%%%%%%%%%%%%%%%%%%%%%%%%%%%%%%%%%%%%%%%%%%%%%%%%%%%%%%%%%%%%%%%%%%%%%%%%%%%%%

\begin{lemma}
\cite[Theorem 4]{stine}\label{lem2.1} Every unital positive linear map on a
commutative $C^{\ast}$-algebra is completely positive.
\end{lemma}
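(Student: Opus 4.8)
This is Stinespring's theorem on positive maps out of commutative algebras; I would prove it by reducing to maps out of $C(X)$ and then constructing a Stinespring-type dilation via an operator-valued measure. By the Gelfand--Naimark theorem the given unital commutative $C^{\ast}$-algebra is $\ast$-isomorphic to $C(X)$ for some compact Hausdorff space $X$, so it suffices to show that a unital positive linear map $\Phi\colon C(X)\to\mathcal B\subseteq B(\mathcal H)$ is completely positive. I would fix $k\geq 1$, identify $\mathcal M_k(C(X))$ with $C(X,\mathcal M_k)$, and recall that $G=[g_{ij}]$ is positive there exactly when $G(x)\geq 0$ in $\mathcal M_k$ for every $x\in X$. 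Then $\Phi_k(G)=[\Phi(g_{ij})]\geq 0$ amounts to $\sum_{i,j=1}^{k}\langle\Phi(g_{ij})\xi_j,\xi_i\rangle\geq 0$ for all $\xi_1,\dots,\xi_k\in\mathcal H$, and this is the inequality I would establish.

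The main tool is a positive-operator-valued measure representing $\Phi$. For $\zeta,\zeta'\in\mathcal H$ the functional $a\mapsto\langle\Phi(a)\zeta',\zeta\rangle$ is bounded and linear on $C(X)$, so by the Riesz representation theorem it is integration against a regular Borel measure $\nu_{\zeta',\zeta}$; since $\Phi$ is positive, $\nu_{\zeta,\zeta}\geq 0$, and since $\Phi$ is unital, hence contractive, $\nu_{\zeta',\zeta}$ has total variation at most $\|\zeta\|\,\|\zeta'\|$. Restricting attention to the finite-dimensional subspace $K=\operatorname{span}\{\xi_1,\dots,\xi_k\}$, the form $(\zeta',\zeta)\mapsto\nu_{\zeta',\zeta}(S)$ is sesquilinear and bounded, hence equals $\langle E(S)\zeta',\zeta\rangle$ for a positive operator $E(S)$ on $K$ with $0\leq E(S)\leq I_K$ and $E(X)=I_K$, and $S\mapsto E(S)$ is countably additive because each $\nu_{\zeta',\zeta}$ is.

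With $E$ at hand the inequality follows by approximation. Given $\varepsilon>0$, compactness of $X$ together with continuity of the finitely many $g_{ij}$ produces a finite Borel partition $X=\bigsqcup_{r=1}^{N}S_r$ and points $x_r\in S_r$ with $|g_{ij}(x)-g_{ij}(x_r)|<\varepsilon$ on $S_r$ for all $i,j$; then $\sum_{i,j}\langle\Phi(g_{ij})\xi_j,\xi_i\rangle$ differs from $\sum_{r}\sum_{i,j}g_{ij}(x_r)\langle E(S_r)\xi_j,\xi_i\rangle$ by at most $\varepsilon\big(\sum_i\|\xi_i\|\big)^2$. For each fixed $r$, writing the positive operator $E(S_r)=T_r^{\ast}T_r$ gives $\sum_{i,j}g_{ij}(x_r)\langle E(S_r)\xi_j,\xi_i\rangle=\big\langle\big(G(x_r)\otimes I_K\big)w_r,w_r\big\rangle$ with $w_r=(T_r\xi_1,\dots,T_r\xi_k)\in K^{k}$, which is nonnegative because $G(x_r)\geq 0$. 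Letting $\varepsilon\to 0$ yields $\sum_{i,j}\langle\Phi(g_{ij})\xi_j,\xi_i\rangle\geq 0$, so $\Phi_k(G)\geq 0$; as $k$ is arbitrary, $\Phi$ is completely positive. Equivalently, one may dilate $E$ by Naimark's theorem to a projection-valued measure and obtain $\Phi(a)=V^{\ast}\pi(a)V$ for a $\ast$-representation $\pi$, whence $[\Phi(g_{ij})]=(I_k\otimes V)^{\ast}\pi_k(G)(I_k\otimes V)\geq 0$ at once.

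I expect the only real obstacle to be setting up the operator-valued measure $E$ carefully (well-definedness, positivity and countable additivity of $S\mapsto E(S)$ extracted from the scalar Riesz representations) and the routine uniform-approximation bookkeeping; once that is in place each partial sum is nonnegative purely because $G(x_r)\geq 0$ and $E(S_r)\geq 0$. For the application in this paper the domain is a commutative $C^{\ast}$-subalgebra of $\mathcal M_n$, hence finite-dimensional with minimal projections $P_1,\dots,P_N$ summing to $I_n$; writing $g_{ij}=\sum_{l}\lambda^{(l)}_{ij}P_l$, positivity of $G$ forces $[\lambda^{(l)}_{ij}]\geq 0$ for each $l$, and then $\Phi_k(G)=\sum_{l}\big([\lambda^{(l)}_{ij}]\otimes\Phi(P_l)\big)\geq 0$ since each $\Phi(P_l)\geq 0$, which gives a short self-contained argument in the case actually needed.
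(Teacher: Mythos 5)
The paper never proves this lemma --- it is imported verbatim from Stinespring \cite[Theorem 4]{stine} --- so there is no internal argument to compare against; the question is only whether your reconstruction of Stinespring's theorem is sound, and it is. Your main argument is the classical one: Gelfand--Naimark reduces the domain to $C(X)$, the scalar functionals $a\mapsto\langle\Phi(a)\zeta',\zeta\rangle$ are represented by regular Borel measures via Riesz, these assemble into a positive operator-valued measure $E$ with $E(X)=I$, and positivity of $[\Phi(g_{ij})]$ follows by approximating the $g_{ij}$ by simple functions, each partial sum being nonnegative because $G(x_r)\geq 0$ and $E(S_r)=T_r^{\ast}T_r$. The loose ends are exactly the routine ones you flag: sesquilinearity and boundedness of $(\zeta',\zeta)\mapsto\nu_{\zeta',\zeta}(S)$ come from uniqueness in the Riesz theorem together with polarization, and in the disjointification of the finite open cover one should take the sample point inside $S_r$ itself, at the harmless cost of replacing $\varepsilon$ by $2\varepsilon$. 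Your closing finite-dimensional argument deserves emphasis: in this paper the lemma is only ever applied to the unital commutative $C^{\ast}$-subalgebra of $\mathcal{M}_n$ generated by a Hermitian $A$ and $I_n$, which is spanned by the spectral projections $P_1,\dots,P_N$ of $A$ summing to $I_n$; there your two-line computation $\Phi_k(G)=\sum_{l}[\lambda^{(l)}_{ij}]\otimes\Phi(P_l)\geq 0$, with $[\lambda^{(l)}_{ij}]\geq 0$ extracted by testing $G\geq 0$ on vectors of the form $c_i w$ with $w$ in the range of $P_l$, proves everything the paper actually needs without Gelfand theory, Riesz representation, or measure theory, and does not even use that $\Phi$ is unital.
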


%%%%%%%%%%%%%%%%%%%%%%%%%%%%%%%%%%%%%%%%%%%%%%%%%%%%%%%%%%%%%%%%%%%%%%%%%%%%%%%%%%%%%%%%%%%%%%%%%%%%%%%%%%%%%%%%%%%%%%%%%%%%%%%%%%%%%%%

\begin{lemma}
\cite[Theorem 1]{stine}\label{lem22} Let $\Phi $ be a unital completely
positive linear map from a $C^{\ast}$-subalgebra $\mathcal{A}$ of $\mathcal{M%
}_{n} $ into $\mathcal{M}_{m}$. Then there exist a Hilbert space $\mathcal{K}
$, an isometry $V: {\mathbb{C}}^m \to {\mathcal{K}}$ and a unital ${\ast}$%
-homomorphism $\pi$ from ${\mathcal{A}}$ into the $C^{\ast}$-algebra $B({%
\mathcal{K}})$ of all bounded linear operators such that $%
\Phi(A)=V^{\ast}\pi(A)V$.
\end{lemma}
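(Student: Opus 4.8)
The plan is to reconstruct Stinespring's original argument, which is a Gelfand--Naimark--Segal type dilation. First I would form the algebraic tensor product $\mathcal{A}\otimes\mathbb{C}^{m}$ and put on it the sesquilinear form $\langle\cdot,\cdot\rangle_{\Phi}$ which on elementary tensors is $\langle a\otimes\xi,\,b\otimes\eta\rangle_{\Phi}=\langle\Phi(b^{\ast}a)\xi,\eta\rangle$, extended bilinearly; since $\Phi(x^{\ast})=\Phi(x)^{\ast}$ for a positive map, this form is Hermitian. The crucial point is that it is positive semidefinite: for a finite sum $x=\sum_{i=1}^{k}a_{i}\otimes\xi_{i}$ and $\widehat{\xi}=(\xi_{1},\dots,\xi_{k})\in\mathbb{C}^{km}$ one has
\[
\langle x,x\rangle_{\Phi}=\sum_{i,j=1}^{k}\langle\Phi(a_{j}^{\ast}a_{i})\xi_{i},\xi_{j}\rangle=\big\langle\Phi_{k}\big([a_{i}^{\ast}a_{j}]_{k}\big)\widehat{\xi},\widehat{\xi}\big\rangle\geq0,
\]
the last inequality holding because $[a_{i}^{\ast}a_{j}]_{k}\geq0$ in $\mathcal{M}_{k}(\mathcal{A})$ (indeed $\langle[a_{i}^{\ast}a_{j}]_{k}\zeta,\zeta\rangle=\|\sum_{i}a_{i}\zeta_{i}\|^{2}$) while $\Phi_{k}$ is positive by complete positivity. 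This is the one place where complete positivity, rather than mere positivity of $\Phi$, is genuinely needed, and I expect verifying it to be the main technical point.

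Next I would take the quotient of $\mathcal{A}\otimes\mathbb{C}^{m}$ by the null space $N=\{x:\langle x,x\rangle_{\Phi}=0\}$ and complete it, obtaining a Hilbert space $\mathcal{K}$; write $[x]$ for the class of $x$. Define $\pi(a)[b\otimes\xi]=[ab\otimes\xi]$ on the dense image of $\mathcal{A}\otimes\mathbb{C}^{m}$. Applying the positive map $\Phi_{k}$ to the matrix inequality $[b_{i}^{\ast}a^{\ast}ab_{j}]_{k}\leq\|a\|^{2}[b_{i}^{\ast}b_{j}]_{k}$ (which holds because $a^{\ast}a\leq\|a\|^{2}I_{n}$) yields $\|\pi(a)x\|^{2}\leq\|a\|^{2}\|x\|^{2}$; in particular $\pi(a)$ is well defined on the quotient and extends to a bounded operator on $\mathcal{K}$. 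It is then routine to check that $a\mapsto\pi(a)$ is linear, multiplicative, $\ast$-preserving and sends $I_{n}$ to $I_{\mathcal{K}}$, i.e.\ is a unital $\ast$-homomorphism $\mathcal{A}\to B(\mathcal{K})$.

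Finally I would define $V\colon\mathbb{C}^{m}\to\mathcal{K}$ by $V\xi=[I_{n}\otimes\xi]$. Because $\Phi$ is unital, $\langle V\xi,V\eta\rangle=\langle\Phi(I_{n})\xi,\eta\rangle=\langle\xi,\eta\rangle$, so $V$ is an isometry. The asserted identity follows from
\[
\langle V^{\ast}\pi(A)V\xi,\eta\rangle=\langle\pi(A)V\xi,V\eta\rangle=\langle[A\otimes\xi],[I_{n}\otimes\eta]\rangle=\langle\Phi(A)\xi,\eta\rangle
\]
for all $\xi,\eta\in\mathbb{C}^{m}$, hence $\Phi(A)=V^{\ast}\pi(A)V$ for every $A\in\mathcal{A}$. (Since $\mathcal{A}\subseteq\mathcal{M}_{n}$ is finite dimensional the space $\mathcal{K}$ may in fact be taken finite dimensional, though the statement does not require this.) As indicated, the only step needing more than positivity of $\Phi$ is the positivity of $\langle\cdot,\cdot\rangle_{\Phi}$, so that is where I would concentrate the argument; everything after it is bookkeeping.
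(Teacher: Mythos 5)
The paper gives no proof of this lemma --- it is quoted directly from Stinespring's paper \cite[Theorem 1]{stine} --- and your proposal is a correct, self-contained reconstruction of exactly that standard argument: the GNS-type sesquilinear form on $\mathcal{A}\otimes\mathbb{C}^{m}$, positivity via $\Phi_{k}$ applied to $[a_{i}^{\ast}a_{j}]_{k}\geq 0$, quotient and completion, boundedness of the left action from $a^{\ast}a\leq\|a\|^{2}I_{n}$, and $V\xi=[I_{n}\otimes\xi]$ an isometry by unitality. Nothing further is needed.
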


%%%%%%%%%%%%%%%%%%%%%%%%%%%%%%%%%%%%%%%%%%%%%%%%%%%%%%%%%%%%%%%%%%%%%%%%%%%%%%%%%%%%%%%%%%%%%%%%%%%%%%%%%%%%%%%%%%%%%%%%%%%%%%%%%%%%%%%

\begin{lemma}
\label{convexle} Let $A\in \mathcal{H}_{n}(J)$ and let $f$ be a convex
function on $J,~0\in J$, $f(0)\leq 0$. Then for every vector $x\in \mathbb{C}%
^{n},$ with $\Vert x\Vert \leq 1$,
\begin{equation*}
f\left( \left\langle Ax,x\right\rangle \right) \leq \left\langle
f(A)x,x\right\rangle .
\end{equation*}
\end{lemma}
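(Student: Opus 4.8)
The plan is to reduce the operator inequality for the convex function $f$ to the scalar Jensen inequality for a measure with total mass at most $1$, exploiting that $f(0)\le 0$ takes care of the ``missing mass.'' First I would diagonalize: write $A = \sum_{j=1}^n \lambda_j P_j$ with $\lambda_j \in J$ the eigenvalues and $P_j$ the rank-one spectral projections, so that $\langle Ax,x\rangle = \sum_j \lambda_j \langle P_j x, x\rangle$ and $\langle f(A)x,x\rangle = \sum_j f(\lambda_j)\langle P_j x,x\rangle$. Set $t_j = \langle P_j x,x\rangle \ge 0$; these satisfy $\sum_j t_j = \|x\|^2 \le 1$. Thus the claim becomes the purely scalar statement
\begin{equation*}
f\Bigl(\sum_{j=1}^n t_j \lambda_j\Bigr) \le \sum_{j=1}^n t_j f(\lambda_j), \qquad t_j \ge 0,\ \sum_j t_j \le 1,\ \lambda_j \in J.
\end{equation*}

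To prove this scalar inequality, I would introduce the slack weight $t_0 = 1 - \sum_{j=1}^n t_j \ge 0$ and the point $\lambda_0 = 0 \in J$, so that $\{t_j\}_{j=0}^n$ is a genuine probability vector on points $\{\lambda_j\}_{j=0}^n$ in the convex set $J$. Ordinary (finite) Jensen's inequality for the convex function $f$ on $J$ then gives
\begin{equation*}
f\Bigl(\sum_{j=0}^n t_j \lambda_j\Bigr) \le \sum_{j=0}^n t_j f(\lambda_j).
\end{equation*}
Since $\lambda_0 = 0$, the left side equals $f\bigl(\sum_{j=1}^n t_j\lambda_j\bigr)$, while the right side equals $t_0 f(0) + \sum_{j=1}^n t_j f(\lambda_j) \le \sum_{j=1}^n t_j f(\lambda_j)$ because $t_0 \ge 0$ and $f(0)\le 0$. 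Combining and using $\langle Ax,x\rangle = \sum_{j=1}^n t_j\lambda_j$ yields exactly $f(\langle Ax,x\rangle) \le \langle f(A)x,x\rangle$.

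The only point that needs a word of care is that $\langle Ax,x\rangle$ genuinely lies in $J$, so that $f$ is defined there — but this is immediate since $\langle Ax,x\rangle$ is a convex combination (with weights $t_j$, $j=1,\dots,n$) of eigenvalues $\lambda_j\in J$ together with the point $0\in J$, and $J$ is an interval hence convex. There is really no hard obstacle here; the content is entirely in recognizing that the condition $\|x\|\le 1$ together with $f(0)\le 0$ is exactly what is needed to absorb the deficit $1-\|x\|^2$ by evaluating $f$ at $0$. Alternatively, one could quote the known operator Jensen inequality of Hansen–Pedersen for a single contraction $C$ with $C^*C \le I$ (taking $C$ the operator $x\mapsto \langle x, \cdot\rangle$-type contraction, or more precisely viewing $x$ as defining a contraction $\mathbb{C}\to\mathbb{C}^n$), but the self-contained diagonalization argument above is shorter and avoids invoking machinery beyond scalar convexity.
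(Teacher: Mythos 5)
Your proof is correct, but it takes a more self-contained route than the paper's. The paper reduces to the unit-vector case: for $x\neq 0$ it sets $y=x/\Vert x\Vert$, writes $\langle Ax,x\rangle=\Vert x\Vert^{2}\langle Ay,y\rangle+(1-\Vert x\Vert^{2})\cdot 0$, applies convexity of $f$ to this two-point combination, and then invokes the cited result \cite[Theorem 1.2]{fmps} (the inequality $f(\langle Ay,y\rangle)\leq\langle f(A)y,y\rangle$ for unit vectors $y$) together with $f(0)\leq 0$ to discard the term $(1-\Vert x\Vert^{2})f(0)$. You instead diagonalize $A$ and prove everything from scalar Jensen, placing the deficit mass $1-\Vert x\Vert^{2}$ as an explicit atom at the point $0$. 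The underlying trick is identical in both arguments --- the missing mass is sent to $0$, where $f(0)\leq 0$ lets you drop it --- but your version proves the unit-vector case from scratch rather than quoting it, which makes the lemma independent of \cite{fmps} at the cost of a slightly longer write-up. Both are valid; your observation that $\langle Ax,x\rangle\in J$ because it is a convex combination of eigenvalues and $0$ is a point the paper leaves implicit, and it is good that you addressed it.
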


\begin{proof}
If $x=0$ then result is trivial. Let us assume that $x\neq 0.$ A well-known
result \cite[Theorem 1.2]{fmps} states that if $f$ is a convex function on
an interval $J$ and $A\in \mathcal{H}_{n}(J)$, then $f\left( \left\langle
Ay,y\right\rangle \right) \leq \left\langle f(A)y,y\right\rangle $ for all
unit vectors $y$. For $\Vert x\Vert \leq 1$, set $y=x/\Vert x\Vert $. Then
\begin{eqnarray*}
f\left( \langle Ax,x\rangle \right) &=&f\left( \Vert x\Vert ^{2}\langle
Ay,y\rangle +(1-\Vert x\Vert ^{2})0\right) \\
&\leq &\Vert x\Vert ^{2}f\left( \langle Ay,y\rangle \right) +(1-\Vert x\Vert
^{2})f(0)\qquad \quad (\mbox{by the convexity of $f$}) \\
&\leq &\Vert x\Vert ^{2}\langle f(A)y,y\rangle +(1-\Vert x\Vert
^{2})f(0)\qquad \qquad (\mbox{by  \cite[Theorem 1.2]{fmps}}) \\
&\leq &\left\langle f(A)x,x\right\rangle \,.\qquad \qquad \qquad \qquad
\qquad \qquad \qquad \quad \quad (\mbox{by $f(0)\leq 0$})
\end{eqnarray*}
\end{proof}

%%%%%%%%%%%%%%%%%%%%%%%%%%%%%%%%%%%%%%%%%%%%%%%%%%%%%%%%%%%%%%%%%%%%%%%%%%%%%%%%%%%%%%%%%%%%%%%%%%%%%%%%%%%%%%%%%%%%%%%%%%%%%%%%%%%%%%%

Now we are ready to prove the following result.

\begin{lemma}
\label{lem2} Let $A\in \mathcal{H}_{n}(J)$ and let $f$ be a convex function
defined on $J,~0\in J$, $f(0)\leq 0$. Then for every vector $x\in \mathbb{C}%
^{m}$ with $\|x\| \leq 1$ and every positive linear map $\Phi $ from $%
\mathcal{M}_{n}$ to $\mathcal{M}_{m}$ with $0<\Phi (I_{n})\leq I_{m}$,
\begin{equation*}
f(\langle \Phi (A)x,x\rangle )\leq \langle \Phi (f(A))x,x\rangle .
\end{equation*}
\end{lemma}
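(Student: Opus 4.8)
The plan is to reduce Lemma \ref{lem2} to the scalar-like estimate in Lemma \ref{convexle} by replacing the positive linear map $\Phi$ with a unital completely positive map, and then representing that map via a Stinespring-type dilation. The key obstacle is that $\Phi$ is only assumed positive (not unital, not completely positive), so the first task is to manufacture a unital completely positive map out of it without disturbing the quantity $\langle \Phi(A)x,x\rangle$.

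First I would restrict attention to the commutative $C^{\ast}$-subalgebra $\mathcal{A}$ of $\mathcal{M}_n$ generated by $A$ and $I_n$; since $A$ is Hermitian, $\mathcal{A}$ is commutative and contains $I_n$, and $f(A)\in\mathcal{A}$. Let $T=\Phi(I_n)$, which by hypothesis satisfies $0<T\leq I_m$. Define $\Psi:\mathcal{A}\to\mathcal{M}_m$ by $\Psi(X)=T^{-1/2}\Phi(X)T^{-1/2}$. Then $\Psi$ is a positive linear map (conjugation by a fixed matrix preserves positivity) and $\Psi(I_n)=I_m$, so $\Psi$ is unital and positive on the commutative $C^{\ast}$-algebra $\mathcal{A}$; by Lemma \ref{lem2.1}, $\Psi$ is completely positive. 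Hence by Lemma \ref{lem22} there are a Hilbert space $\mathcal{K}$, an isometry $V:\mathbb{C}^m\to\mathcal{K}$ and a unital $\ast$-homomorphism $\pi:\mathcal{A}\to B(\mathcal{K})$ with $\Psi(X)=V^{\ast}\pi(X)V$ for all $X\in\mathcal{A}$.

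Now fix $x\in\mathbb{C}^m$ with $\|x\|\leq 1$, and set $w=T^{-1/2}x$; then $y=Vw\in\mathcal{K}$. The point is to run the argument of Lemma \ref{convexle} inside $\mathcal{K}$ using the operator $B=\pi(A)$, which is Hermitian with spectrum contained in $J$ (a $\ast$-homomorphism is spectrum-shrinking), and which satisfies $f(\pi(A))=\pi(f(A))$ by the functional calculus compatibility of $\ast$-homomorphisms. We compute
\begin{align*}
\langle \Phi(A)x,x\rangle
&=\langle T^{1/2}\Psi(A)T^{1/2}x,x\rangle
=\langle \Psi(A)w,w\rangle
=\langle \pi(A)Vw,Vw\rangle
=\langle \pi(A)y,y\rangle,
\end{align*}
and likewise $\langle \Phi(f(A))x,x\rangle=\langle \pi(f(A))y,y\rangle=\langle f(\pi(A))y,y\rangle$. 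Moreover $\|y\|=\|Vw\|=\|w\|=\|T^{-1/2}x\|$; since $0<T\leq I_m$ gives $T^{-1}\geq I_m$, we actually only know $\|y\|\geq\|x\|$, which is the wrong direction — so I would instead keep track of $\|y\|^2=\langle T^{-1}x,x\rangle$ and handle the norm bound more carefully.

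To close the gap I would not normalize through $\Psi$ but argue directly: with $y=Vw$, $w=T^{-1/2}x$, and using that $\pi$ is a unital $\ast$-homomorphism into $B(\mathcal{K})$, apply the convexity argument of Lemma \ref{convexle} on $\mathcal{K}$ but relative to the (possibly non-unit) vector $y$. Writing $\|y\|^2=\langle T^{-1}x,x\rangle\geq \|x\|^2$, this is $\geq 1$ in general, so Lemma \ref{convexle} does not apply as stated. The fix is to instead observe that $\langle\Phi(A)x,x\rangle=\langle\pi(A)y,y\rangle$ with $\|y\|^2=\langle\Phi(I_n)x,x\rangle\leq\|x\|^2\leq 1$ — here I made the right substitution: take $y=VT^{1/2}x$ is wrong too; rather note $\langle\Phi(A)x,x\rangle=\langle\Psi(A)T^{1/2}x,T^{1/2}x\rangle$ so set $u=T^{1/2}x$, $y=Vu$, and then $\|y\|=\|u\|=\|T^{1/2}x\|\leq\|x\|\leq 1$ since $T\leq I_m$. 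Now Lemma \ref{convexle} applies to $\pi(A)\in\mathcal{H}(\mathcal{K})(J)$ and the unit-ball vector $y$, giving $f(\langle\pi(A)y,y\rangle)\leq\langle f(\pi(A))y,y\rangle$; translating back through $y=VT^{1/2}x$ and $f(\pi(A))=\pi(f(A))$ yields $f(\langle\Phi(A)x,x\rangle)\leq\langle\Phi(f(A))x,x\rangle$, as desired. The main obstacle, then, is exactly the bookkeeping of which conjugation ($T^{1/2}$ vs.\ $T^{-1/2}$) makes the vector land in the closed unit ball so that Lemma \ref{convexle} becomes applicable; once that is pinned down, the Stinespring representation does the rest.
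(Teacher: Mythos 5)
Your proposal is correct and, after the self-correction in the final paragraph, coincides with the paper's own proof: normalize $\Phi$ to the unital positive map $\Psi(X)=\Phi(I_n)^{-1/2}\Phi(X)\Phi(I_n)^{-1/2}$ on the commutative algebra generated by $A$ and $I_n$, invoke Lemma \ref{lem2.1} and the Stinespring dilation of Lemma \ref{lem22}, and apply Lemma \ref{convexle} to $\pi(A)$ and the vector $V\Phi(I_n)^{1/2}x$, whose norm is at most $1$ because $\Phi(I_n)\leq I_m$. The only difference is expository: the paper goes directly to the correct substitution $u=\Phi(I_n)^{1/2}x$, whereas you first tried $\Phi(I_n)^{-1/2}x$ before fixing the bookkeeping.
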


\begin{proof}
Let $\mathcal{A}$ be the unital commutative $C^{\ast}$-algebra generated by $%
A$ and $I_n$. Let $\Psi (X)=\Phi (I_{n})^{-\frac{1}{2}}\Phi (X)\Phi
(I_{n})^{-\frac{1}{2}}$, $X\in \mathcal{A}$. Then $\Psi $ is a unital
positive linear map from $\mathcal{A}$ to $\mathcal{M}_{m}$. Therefore by
Lemma \ref{lem2.1}, $\Psi $ is completely positive. It follows from Lemma %
\ref{lem22} that there exist a Hilbert space $\mathcal{K}$, an isometry $V: {%
\mathbb{C}}^m \to {\mathcal{K}}$ and a unital ${\ast}$-homomorphism $\pi: {%
\mathcal{A}}\to B({\mathcal{K}})$ such that $\Psi(A)=V^{\ast}\pi(A)V$. Since
$\pi $ is a representation, it commutes with $f$. For any vector $x\in
\mathbb{C}^{m}$ with $\|x\| \leq 1$, $\|V\Phi (I_{n})^{1/2}x\|\leq 1$. We
have
\begin{align*}
f(\langle \Phi (A)x,x\rangle )& =f(\langle \Phi (I_{n})^{1/2}\Psi (A)\Phi
(I_{n})^{1/2}x,x\rangle ) \\
& =f(\langle \Phi (I_{n})^{1/2}V^{\ast}\pi (A)V\Phi (I_{n})^{1/2}x,x\rangle )
\\
& =f(\langle \pi (A)V\Phi (I_{n})^{1/2}x,V\Phi (I_{n})^{1/2}x\rangle ) \\
& \leq \langle f(\pi (A))V\Phi (I_{n})^{1/2}x,V\Phi (I_{n})^{1/2}x\rangle
\qquad \qquad \quad (\mbox{by  Lemma \ref{convexle}}) \\
& =\langle \pi (f(A))V\Phi (I_{n})^{1/2}x,V\Phi (I_{n})^{1/2}x\rangle \\
& =\langle \Phi (I_{n})^{1/2}V^{\ast}\pi (f(A))V\Phi (I_{n})^{1/2}x,x\rangle
\\
& =\langle \Phi (f(A))x,x\rangle\,.
\end{align*}
\end{proof}

%%%%%%%%%%%%%%%%%%%%%%%%%%%%%%%%%%%%%%%%%%%%%%%%%%%%%%%%%%%%%%%%%%%%%%%%%%%%%%%%%%%%%%%%%%%%%%%%%%%%%%%%%%%%%%%%%%%%%%%%%%%%%%%%%%%%%%%

\begin{rem}
We can remove the condition $0\in J$ in Lemma \ref{lem2} and assume that $%
\|x\|=1$, if we assume that $\Phi$ is unital. To observe this, one may
follow the same argument as in the proof of Lemma \ref{lem2} and use \cite[%
Theorem 1.2]{fmps}.
\end{rem}

%%%%%%%%%%%%%%%%%%%%%%%%%%%%%%%%%%%%%%%%%%%%%%%%%%%%%%%%%%%%%%%%%%%%%%%%%%%%%%%%%%%%%%%%%%%%%%%%%%%%%%%%%%%%%%%%%%%%%%%%%%%%%%%%%%%%%%%

\begin{lemma}
\cite[Pg. 67]{bhatiaa}\label{lem12} Let $A \in \mathcal{H}_n$. Then
\begin{equation*}
\sum_{j=1}^k \lambda_j (A)= \max \sum_{j=1}^k \langle A x_j,x_j
\rangle\qquad (1\le k \le n),
\end{equation*}
where the maximum is taken over all choices of orthonormal vectors $%
x_1,x_2,\cdots,x_k.$
\end{lemma}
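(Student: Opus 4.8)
The plan is to deduce this from the spectral theorem together with one elementary estimate on weighted sums of the eigenvalues. Write $A=\sum_{i=1}^{n}\lambda_i(A)\,u_iu_i^{\ast}$ with $\{u_1,\dots,u_n\}$ an orthonormal eigenbasis of $A$. Choosing the particular orthonormal system $x_j=u_j$ ($1\le j\le k$) gives $\sum_{j=1}^{k}\langle Au_j,u_j\rangle=\sum_{j=1}^{k}\lambda_j(A)$, so the right-hand side of the asserted identity is at least $\sum_{j=1}^{k}\lambda_j(A)$, and this simultaneously exhibits a configuration at which the maximum (over the compact set of orthonormal $k$-tuples) is attained. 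It therefore remains only to prove the reverse inequality $\sum_{j=1}^{k}\langle Ax_j,x_j\rangle\le\sum_{j=1}^{k}\lambda_j(A)$ for every orthonormal system $x_1,\dots,x_k$.

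For that, I would expand each $x_j$ in the eigenbasis. Since $\langle Ax_j,x_j\rangle=\sum_{i=1}^{n}\lambda_i(A)\,|\langle x_j,u_i\rangle|^2$, summing over $j$ yields
\[
\sum_{j=1}^{k}\langle Ax_j,x_j\rangle=\sum_{i=1}^{n}\lambda_i(A)\,c_i,\qquad c_i:=\sum_{j=1}^{k}|\langle x_j,u_i\rangle|^2 .
\]
Let $P=\sum_{j=1}^{k}x_jx_j^{\ast}$, which is the orthogonal projection onto $\operatorname{span}\{x_1,\dots,x_k\}$ because the $x_j$ are orthonormal. Then $c_i=\langle Pu_i,u_i\rangle$, whence $0\le c_i\le 1$ (the eigenvalues of a projection lie in $\{0,1\}$) and $\sum_{i=1}^{n}c_i=\operatorname{tr}P=k$.

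The remaining, purely scalar, claim is: if $\lambda_1\ge\cdots\ge\lambda_n$ and $0\le c_i\le 1$ with $\sum_i c_i=k$, then $\sum_{i=1}^{n}\lambda_ic_i\le\sum_{i=1}^{k}\lambda_i$. I would prove this by the splitting
\[
\sum_{i=1}^{n}\lambda_ic_i-\sum_{i=1}^{k}\lambda_i=\sum_{i=1}^{k}\lambda_i(c_i-1)+\sum_{i=k+1}^{n}\lambda_ic_i\le\lambda_k\Big(\sum_{i=1}^{k}(c_i-1)+\sum_{i=k+1}^{n}c_i\Big)=\lambda_k\Big(\sum_{i=1}^{n}c_i-k\Big)=0,
\]
where in the first sum one uses $c_i-1\le 0$ together with $\lambda_i\ge\lambda_k$, and in the second $c_i\ge 0$ together with $\lambda_i\le\lambda_k$. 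Combining this with the lower bound from the first paragraph proves the identity.

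I do not expect a serious obstacle here; the only point that needs care is the bookkeeping for the weights $c_i$ and the verification $0\le c_i\le 1$, $\sum_i c_i=k$. Conceptually the content is that the admissible weight vectors $(c_1,\dots,c_n)$ form a polytope whose extreme points are exactly the $0$--$1$ vectors with precisely $k$ ones, and the linear functional $c\mapsto\sum_i\lambda_ic_i$ attains its maximum at the vertex $(1,\dots,1,0,\dots,0)$; one could phrase the last step that way instead, but the Abel-type estimate above is the most self-contained.
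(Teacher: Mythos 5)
Your argument is correct and complete. Note first that the paper itself gives no proof of this lemma: it is quoted verbatim from Bhatia's \emph{Matrix Analysis} (the reference \cite{bhatiaa}, p.~67), where it appears as Ky Fan's maximum principle. So there is no in-paper proof to compare against; what you have supplied is a self-contained proof of the cited result. Each step checks out: the choice $x_j=u_j$ shows the maximum is attained and gives the lower bound; the identity $\sum_{j=1}^{k}\langle Ax_j,x_j\rangle=\sum_{i=1}^{n}\lambda_i(A)c_i$ with $c_i=\langle Pu_i,u_i\rangle$ is right, and the constraints $0\le c_i\le 1$ (from $0\le P\le I$) and $\sum_i c_i=\operatorname{tr}P=k$ are exactly what is needed; the Abel-type comparison against $\lambda_k$ correctly finishes the scalar inequality. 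This is one of the two standard routes; the other, which is closer to Bhatia's own presentation, observes that $\sum_{j=1}^{k}\langle Ax_j,x_j\rangle$ is the trace of the compression of $A$ to $\operatorname{span}\{x_1,\dots,x_k\}$ and invokes Cauchy interlacing to bound the $j$-th eigenvalue of that compression by $\lambda_j(A)$. Your version trades the interlacing theorem for the elementary polytope/Abel estimate, which makes it more self-contained; the interlacing route is shorter if one already has that theorem available. Either way, the lemma is established.
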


%%%%%%%%%%%%%%%%%%%%%%%%%%%%%%%%%%%%%%%%%%%%%%%%%%%%%%%%%%%%%%%%%%%%%%%%%%%%%%%%%%%%%%%%%%%%%%%%%%%%%%%%%%%%%%%%%%%%%%%%%%%%%%%%%%%%%%%

\begin{theorem}
\label{thm1} Let $f$ be a convex function on $J,~0\in J,$ $f(0)\leq 0$ and $%
A\in \mathcal{H}_{n}(J)$. Then
\begin{equation*}
\sum_{j=1}^{k}\lambda _{j}\left(f\left(\sum_{i=1}^\ell \alpha_i \Phi
_i(A)\right)\right)\leq \sum_{j=1}^{k}\lambda
_{j}\left(\sum_{i=1}^\ell\alpha_i \Phi_i(f(A))\right)\qquad (1\leq k\leq m)
\end{equation*}%
for positive linear maps $\Phi _{i},~i=1, 2, \cdots, \ell$ \ from $\mathcal{M%
}_{n}$ to $\mathcal{M}_{m}$ such that $0 \leq
\sum_{i=1}^\ell\alpha_i\Phi_i(I_{n})\leq I_{m}$, $\alpha_i \geq 0.$
\end{theorem}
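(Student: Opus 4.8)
The plan is to reduce the statement to a pointwise (single-vector) inequality and then invoke the variational characterization of partial eigenvalue sums from Lemma~\ref{lem12}. Concretely, set $\Phi=\sum_{i=1}^{\ell}\alpha_i\Phi_i$. Since each $\Phi_i$ is a positive linear map and $\alpha_i\geq 0$, the map $\Phi$ is again a positive linear map from $\mathcal{M}_n$ to $\mathcal{M}_m$, and the hypothesis reads $0\leq\Phi(I_n)\leq I_m$. The theorem is then exactly the assertion
\[
\sum_{j=1}^{k}\lambda_j\big(f(\Phi(A))\big)\leq\sum_{j=1}^{k}\lambda_j\big(\Phi(f(A))\big)\qquad(1\leq k\leq m),
\]
so it suffices to prove this for an arbitrary positive linear map $\Phi$ with $0\leq\Phi(I_n)\leq I_m$.

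First I would dispose of the degenerate case $\Phi(I_n)\not>0$: if $\Phi(I_n)$ is merely positive semidefinite but not invertible, one approximates by $\Phi+\varepsilon\langle\cdot\rangle$-type perturbations, or more cleanly restricts to the range of $\Phi(I_n)$; since eigenvalues depend continuously on the matrix, the inequality for the invertible case passes to the limit. So assume $0<\Phi(I_n)\leq I_m$, which is precisely the hypothesis of Lemma~\ref{lem2}. Next, apply Lemma~\ref{lem12} to the Hermitian matrix $f(\Phi(A))$: there exist orthonormal vectors $x_1,\dots,x_k\in\mathbb{C}^m$ with
\[
\sum_{j=1}^{k}\lambda_j\big(f(\Phi(A))\big)=\sum_{j=1}^{k}\langle f(\Phi(A))x_j,x_j\rangle.
\]
For each fixed $j$, the vector $x_j$ is a unit vector, hence $\|x_j\|\leq 1$, and Lemma~\ref{lem2} (applied with this $x_j$, the convex function $f$, and the matrix $A\in\mathcal{H}_n(J)$) gives
\[
\langle f(\Phi(A))x_j,x_j\rangle=f\big(\langle\Phi(A)x_j,x_j\rangle\big)\leq\langle\Phi(f(A))x_j,x_j\rangle,
\]
where the first equality uses that $f(\langle\Phi(A)x_j,x_j\rangle)$ should be read as the scalar value and the point is that Lemma~\ref{lem2} bounds $f(\langle\Phi(A)x_j,x_j\rangle)$ by $\langle\Phi(f(A))x_j,x_j\rangle$; combined with $\langle f(\Phi(A))x_j,x_j\rangle\le f(\langle\Phi(A)x_j,x_j\rangle)$? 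No — rather one should note that $\langle f(\Phi(A))x_j,x_j\rangle$ need not equal $f(\langle\Phi(A)x_j,x_j\rangle)$, so instead I apply Lemma~\ref{convexle} to get $f(\langle\Phi(A)x_j,x_j\rangle)\leq\langle f(\Phi(A))x_j,x_j\rangle$ and Lemma~\ref{lem2} to get $f(\langle\Phi(A)x_j,x_j\rangle)\leq\langle\Phi(f(A))x_j,x_j\rangle$. Summing the \emph{latter} over $j=1,\dots,k$ and combining with the bound $\sum_j\langle f(\Phi(A))x_j,x_j\rangle\leq\sum_j\lambda_j(\Phi(f(A)))$ (again Lemma~\ref{lem12}, this time as an inequality since $x_1,\dots,x_k$ are just \emph{some} orthonormal system) would close the argument — but this requires $\sum_j\langle f(\Phi(A))x_j,x_j\rangle\le\sum_j f(\langle\Phi(A)x_j,x_j\rangle)$, which is false in general.

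The correct route, and the main technical point, is therefore the following. Choose the orthonormal vectors $x_1,\dots,x_k$ to be eigenvectors of $f(\Phi(A))$ corresponding to its $k$ largest eigenvalues, so that $\langle f(\Phi(A))x_j,x_j\rangle=\lambda_j(f(\Phi(A)))$ genuinely holds term by term. Crucially, $f(\Phi(A))$ and $\Phi(A)$ commute and $f$ is applied via the functional calculus, so $x_j$ may simultaneously be taken to be eigenvectors of $\Phi(A)$; if $\Phi(A)x_j=\mu_j x_j$ then $\langle f(\Phi(A))x_j,x_j\rangle=f(\mu_j)=f(\langle\Phi(A)x_j,x_j\rangle)$. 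Now Lemma~\ref{lem2} yields $f(\langle\Phi(A)x_j,x_j\rangle)\leq\langle\Phi(f(A))x_j,x_j\rangle$ for each $j$. Summing,
\[
\sum_{j=1}^{k}\lambda_j\big(f(\Phi(A))\big)=\sum_{j=1}^{k}f\big(\langle\Phi(A)x_j,x_j\rangle\big)\leq\sum_{j=1}^{k}\langle\Phi(f(A))x_j,x_j\rangle\leq\sum_{j=1}^{k}\lambda_j\big(\Phi(f(A))\big),
\]
the last step being Lemma~\ref{lem12} applied to $\Phi(f(A))$ with the orthonormal system $x_1,\dots,x_k$. This proves the displayed inequality for $\Phi=\sum_i\alpha_i\Phi_i$, and hence the theorem. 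The expected obstacle is exactly the one flagged above: one must pick $x_1,\dots,x_k$ to be joint eigenvectors of $\Phi(A)$ and $f(\Phi(A))$ so that the term-by-term identity $\lambda_j(f(\Phi(A)))=f(\langle\Phi(A)x_j,x_j\rangle)$ is available before Lemma~\ref{lem2} is invoked; a naive estimate using Lemma~\ref{convexle} in the wrong direction does not work.
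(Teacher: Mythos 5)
Your final argument is correct and is essentially the paper's own proof: both choose orthonormal eigenvectors of $\sum_i\alpha_i\Phi_i(A)$ ordered so that the top $k$ values of $f$ on the spectrum are picked out, apply Lemma~\ref{lem2} termwise, and finish with Lemma~\ref{lem12}. The only difference is cosmetic (you aggregate the maps into a single $\Phi$ and explicitly note the continuity argument needed when $\Phi(I_n)$ is singular, a point the paper leaves implicit), so no further comparison is needed.
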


\begin{proof}
Let $\lambda _{1},\lambda _{2},\cdots ,\lambda _{m}$ be the eigenvalues of $%
\sum_{i=1}^{\ell }\alpha _{i}\Phi _{i}(A)$ with $u_{1},u_{2},\cdots ,u_{m}$
as an orthonormal system of corresponding eigenvectors arranged such that $%
f(\lambda _{1})\geq f(\lambda _{2})\geq \cdots \geq f(\lambda _{m})$. We
have
\begin{align*}
\sum_{j=1}^{k}\lambda _{j}\left( f\left( \sum_{i=1}^{\ell }\alpha _{i}\Phi
_{i}(A)\right) \right) & =\sum_{j=1}^{k}f\left( \left\langle \left(
\sum_{i=1}^{\ell }\alpha _{i}\Phi _{i}(A)\right) u_{j},u_{j}\right\rangle
\right) \\
& \leq \sum_{j=1}^{k}\left\langle \left( \sum_{i=1}^{\ell }\alpha _{i}\Phi
_{i}(f(A))\right) u_{j},u_{j}\right\rangle \quad (\mbox{by Lemma \ref{lem2}})
\\
& \leq \sum_{j=1}^{k}\lambda _{j}\left( \sum_{i=1}^{\ell }\alpha _{i}\Phi
_{i}(f(A))\right) \qquad \quad \quad (\mbox{by Lemma \ref{lem12}})
\end{align*}%
for $1\leq k\leq m$.
\end{proof}

%%%%%%%%%%%%%%%%%%%%%%%%%%%%%%%%%%%%%%%%%%%%%%%%%%%%%%%%%%%%%%%%%%%%%%%%%%%%%%%%%%%%%%%%%%%%%%%%%%%%%%%%%%%%%%%%%%%%%%%%%%%%%%%%%%%%%%%

The following result is a generalization of \cite[Theorem 1]{km}.

\begin{corollary}
\label{cornew} Let $A_{1},\cdots ,A_{\ell }\in \mathcal{H}_n$ and $%
X_{1},\cdots ,X_{\ell }\in \mathcal{M}_n$ such that
\begin{equation*}
\sum_{i=1}^{\ell }\alpha_{i}X_{i}^{\ast }X_{i}\leq I_n,
\end{equation*}%
where $\alpha_i> 0$ and let $f$ be a convex function on $\mathbb{R}$, $%
f(0)\leq 0$ and $f(uv)\leq f(u)f(v)$ for all $u, v \in \mathbb{R}$. Then
\begin{equation}
\sum_{j=1}^{k}\lambda _{j}\left(f\left(\sum_{i=1}^{\ell }X_{i}^{\ast
}A_{i}X_{i}\right)\right) \leq \sum_{j=1}^{k}\lambda _{j}\left(
\sum_{i=1}^{\ell }\alpha_if(\alpha_i^{-1})X_{i}^{\ast }f(A_{i})X_{i}\right)
\label{in22}
\end{equation}
for $1\leq k\leq n.$
\end{corollary}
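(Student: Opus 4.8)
The plan is to realize Corollary~\ref{cornew} as a special case of Theorem~\ref{thm1}. To do this I first need to build the right positive linear maps out of the contractions $X_i$. Define $\Phi_i\colon\mathcal{M}_n\to\mathcal{M}_n$ by $\Phi_i(Y)=X_i^{\ast}YX_i$. Each $\Phi_i$ is clearly linear and positive, since $Y\geq 0$ implies $X_i^{\ast}YX_i\geq 0$. Moreover $\Phi_i(I_n)=X_i^{\ast}X_i$, so the hypothesis $\sum_{i=1}^{\ell}\alpha_i X_i^{\ast}X_i\leq I_n$ becomes precisely $0\leq\sum_{i=1}^{\ell}\alpha_i\Phi_i(I_n)\leq I_n$, which is exactly the normalization required in Theorem~\ref{thm1} (with $m=n$). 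With the coefficients $\alpha_i\geq 0$ and $J=\mathbb{R}$, all structural hypotheses of Theorem~\ref{thm1} are met.

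Next I would apply Theorem~\ref{thm1} with $A$ replaced by $A_i$ — but here one must be slightly careful, because Theorem~\ref{thm1} is stated for a single matrix $A$ fed into every $\Phi_i$, whereas Corollary~\ref{cornew} has a different $A_i$ in each slot. The clean fix is to pass to the block matrix: set $A=A_1\oplus\cdots\oplus A_\ell\in\mathcal{H}_{n\ell}$ and let $\widetilde{X}_i\colon\mathbb{C}^n\to\mathbb{C}^{n\ell}$ be the isometric embedding onto the $i$-th block, so that $\widetilde{X}_i^{\ast}A\widetilde{X}_i=A_i$ and $\widetilde{X}_i^{\ast}f(A)\widetilde{X}_i=f(A_i)$. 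Then define $\Phi_i(Y)=X_i^{\ast}\widetilde{X}_i^{\ast}Y\widetilde{X}_iX_i$ on $\mathcal{M}_{n\ell}$; these are positive linear maps from $\mathcal{M}_{n\ell}$ to $\mathcal{M}_n$, we still have $\Phi_i(I_{n\ell})=X_i^{\ast}X_i$, and now $\Phi_i(A)=X_i^{\ast}A_iX_i$ while $\Phi_i(f(A))=X_i^{\ast}f(A_i)X_i$. Applying Theorem~\ref{thm1} to this data yields
\begin{equation*}
\sum_{j=1}^{k}\lambda_j\!\left(f\!\left(\sum_{i=1}^{\ell}\alpha_i X_i^{\ast}A_iX_i\right)\right)\leq\sum_{j=1}^{k}\lambda_j\!\left(\sum_{i=1}^{\ell}\alpha_i X_i^{\ast}f(A_i)X_i\right)\qquad(1\leq k\leq n).
\end{equation*}

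The remaining gap is cosmetic: the left-hand side of \eqref{in22} has $\sum_i X_i^{\ast}A_iX_i$ \emph{without} the weights $\alpha_i$, and the right-hand side carries the factor $\alpha_i f(\alpha_i^{-1})$ rather than $\alpha_i$. This is where the submultiplicativity hypothesis $f(uv)\leq f(u)f(v)$ enters. I would absorb the weights by replacing $A_i$ with $\alpha_i^{-1}A_i$ (legitimate since $J=\mathbb{R}$): then $\sum_i\alpha_i X_i^{\ast}(\alpha_i^{-1}A_i)X_i=\sum_i X_i^{\ast}A_iX_i$ recovers the required left side, while on the right side $f(\alpha_i^{-1}A_i)$ appears. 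Using the functional-calculus consequence of submultiplicativity — namely $f(\alpha_i^{-1}A_i)\leq f(\alpha_i^{-1})f(A_i)$ as an operator inequality when $f\geq 0$ (or more carefully, comparing the relevant quadratic forms) — one bounds $\sum_i\alpha_i X_i^{\ast}f(\alpha_i^{-1}A_i)X_i\leq\sum_i\alpha_i f(\alpha_i^{-1})X_i^{\ast}f(A_i)X_i$, and then monotonicity of $\sum_{j=1}^k\lambda_j(\cdot)$ under the Loewner order (via Lemma~\ref{lem12}) finishes the proof. The one point demanding genuine care — the main obstacle — is justifying the step $f(\alpha_i^{-1}A_i)\leq f(\alpha_i^{-1})f(A_i)$: submultiplicativity is a pointwise scalar inequality, and promoting it to an operator inequality requires either that $f$ be nonnegative with $f(A_i)$ and the scalar commuting (which they do, so the functional calculus gives it directly on the spectrum of $A_i$), or an argument at the level of the quadratic forms $\langle\,\cdot\,u_j,u_j\rangle$ exactly as in the proof of Theorem~\ref{thm1}. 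I would therefore redo the eigenvalue-sum estimate from scratch with the orthonormal eigenvector system of $\sum_i X_i^{\ast}A_iX_i$, inserting Lemma~\ref{lem2} and the scalar submultiplicativity at the appropriate line, rather than quoting Theorem~\ref{thm1} as a black box.
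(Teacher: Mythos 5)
Your proposal is correct and follows essentially the same route as the paper: realize the maps as $\Phi_i(Y)=X_i^{\ast}Y_{ii}X_i$ on the block matrix $A_1\oplus\cdots\oplus A_\ell$, apply Theorem~\ref{thm1}, substitute $A_i\mapsto\alpha_i^{-1}A_i$, and finish with the functional-calculus inequality $f(\alpha_i^{-1}A_i)\leq f(\alpha_i^{-1})f(A_i)$ (which, as you note, is immediate since both sides are diagonal in an eigenbasis of $A_i$) together with monotonicity of the partial eigenvalue sums.
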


\begin{proof}
To prove inequality (\ref{in22}), if necessary, by replacing $X_{i}$ by $%
X_{i}+\epsilon I_n$, we can assume that the $X_{i}$'s are invertible.\newline
Let $A\in \mathcal{M}_{\ell n}$ be partitioned as $\left(
\begin{array}{ccc}
A_{11} & \cdots & A_{1\ell } \\
\vdots &  & \vdots \\
A_{\ell 1} & \cdots & A_{\ell \ell }%
\end{array}%
\right) ,$ $A_{ij}\in \mathcal{M}_n,$ $1\leq i,j\leq \ell ,$ as an $\ell
\times \ell $ block matrix. Consider the linear maps $\Phi _{i}:\mathcal{M}%
_{\ell n}\longrightarrow \mathcal{M}_n,i=1,\cdots ,\ell ,$ defined by $\Phi
_{i}(A)=X_{i}^{\ast }A_{ii}X_{i},~i=1,\cdots ,\ell .$ Then $\Phi _{i}$'s are
positive linear maps from $\mathcal{M}_{\ell n}$ to $\mathcal{M}_n$ such
that
\begin{equation*}
0 \leq \sum_{i=1}^{\ell }\alpha _{i}\Phi _{i}(I_{\ell n})=\sum_{i=1}^{\ell
}\alpha _{i}X_{i}^{\ast }X_{i}\leq I_n\,.
\end{equation*}%
Using Theorem \ref{thm1} for the diagonal matrix $A=\mbox{diag}%
(A_{11},\cdots ,A_{\ell \ell })$, we have
\begin{equation*}
\sum_{j=1}^{k}\lambda _{j}\left( f\left( \sum_{i=1}^{\ell }\alpha
_{i}X_{i}^{\ast }A_{ii}X_{i}\right)\right) \leq \sum_{j=1}^{k}\lambda
_{j}\left( \sum_{i=1}^{\ell }\alpha _{i}X_{i}^{\ast }f(A_{ii})X_{i}\right)
\qquad (1\leq k\leq n).
\end{equation*}%
Replacing $A_{ii}$ by $\alpha_i^{-1}A_{i}$ in the above inequality, we get
\begin{equation*}
\sum_{j=1}^{k}\lambda _{j}\left(f\left(\sum_{i=1}^{\ell }X_{i}^{\ast
}A_{i}X_{i}\right)\right) \leq \sum_{j=1}^{k}\lambda _{j}\left(
\sum_{i=1}^{\ell }\alpha_if(\alpha_i^{-1})X_{i}^{\ast }f(A_{i})X_{i}\right)
\qquad (1\leq k\leq n)\,,
\end{equation*}
since by an easy application of the functional calculus $f(\alpha_i^{-1}A_i)%
\leq f(\alpha_i^{-1})f(A_i)$.
\end{proof}

%%%%%%%%%%%%%%%%%%%%%%%%%%%%%%%%%%%%%%%%%%%%%%%%%%%%%%%%%%%%%%%%%%%%%%%%%%%%%%%%%%%%%%%%%%%%%%%%%%%%%%%%%%%%%%%%%%%%%%%%%%%%%%%%%%%%%%%

Now we obtain the following eigenvalue generalization of inequality (\ref%
{vasin}) as promised in the introduction.

\begin{theorem}
\label{cor4.5} Let $A_{1},\cdots ,A_{\ell }\in \mathcal{H}_n$ and $%
X_{1},\cdots ,X_{\ell }\in \mathcal{M}_n$ be such that
\begin{equation*}
\sum_{i=1}^{\ell }p_{i}^{1/1-r}X_{i}^{\ast }X_{i}\leq \sum_{i=1}^{\ell
}p_{i}^{1/(1-r)}I_n,
\end{equation*}%
where $p_{1},\cdots ,p_{\ell }>0,r>1$. Then
\begin{equation*}
\sum_{j=1}^{k}\lambda _{j}\left( \left\vert \sum_{i=1}^{\ell }X_{i}^{\ast
}A_{i}X_{i}\right\vert ^{r}\right) \leq \left( \sum_{i=1}^{\ell }p_{i}^{%
\frac{1}{1-r}}\right) ^{r-1}\sum_{j=1}^{k}\lambda _{j}\left(
\sum_{i=1}^{\ell }p_{i}X_{i}^{\ast }\vert A_{i}\vert ^{r}X_{i}\right)
\end{equation*}
for $1\leq k\leq n.$
\end{theorem}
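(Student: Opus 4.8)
The plan is to obtain Theorem \ref{cor4.5} as a direct specialization of Corollary \ref{cornew}. The function $f(t)=|t|^{r}$ is convex on $\mathbb{R}$ since $r>1$, it satisfies $f(0)=0\le 0$, and it satisfies $f(uv)=|uv|^{r}=|u|^{r}|v|^{r}=f(u)f(v)$ for all $u,v\in\mathbb{R}$; hence it fulfills all the hypotheses imposed on $f$ in Corollary \ref{cornew}. The only remaining work is to choose the weights $\alpha_i$ appropriately and then simplify the two sides.

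First I would set $S=\sum_{k=1}^{\ell}p_{k}^{1/(1-r)}$, which is a positive real number because each $p_{k}>0$, and define $\alpha_{i}=p_{i}^{1/(1-r)}/S>0$. Dividing the hypothesis $\sum_{i=1}^{\ell}p_{i}^{1/(1-r)}X_{i}^{\ast}X_{i}\le S\,I_{n}$ by $S$ gives $\sum_{i=1}^{\ell}\alpha_{i}X_{i}^{\ast}X_{i}\le I_{n}$, so the hypotheses of Corollary \ref{cornew} hold with this choice of $\alpha_i$ and with $f(t)=|t|^{r}$. Applying Corollary \ref{cornew} then yields, for $1\le k\le n$,
\[
\sum_{j=1}^{k}\lambda_{j}\!\left(\left|\sum_{i=1}^{\ell}X_{i}^{\ast}A_{i}X_{i}\right|^{r}\right)\le \sum_{j=1}^{k}\lambda_{j}\!\left(\sum_{i=1}^{\ell}\alpha_{i}f(\alpha_{i}^{-1})X_{i}^{\ast}|A_{i}|^{r}X_{i}\right),
\]
using $f\big(\sum_{i}X_{i}^{\ast}A_{i}X_{i}\big)=\big|\sum_{i}X_{i}^{\ast}A_{i}X_{i}\big|^{r}$ and $f(A_{i})=|A_{i}|^{r}$.

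It then remains to identify the coefficient on the right-hand side. Since $\alpha_{i}>0$, we have $\alpha_{i}f(\alpha_{i}^{-1})=\alpha_{i}\,\alpha_{i}^{-r}=\alpha_{i}^{\,1-r}=\big(p_{i}^{1/(1-r)}/S\big)^{1-r}=p_{i}\,S^{\,r-1}$. Hence $\sum_{i}\alpha_{i}f(\alpha_{i}^{-1})X_{i}^{\ast}|A_{i}|^{r}X_{i}=S^{\,r-1}\sum_{i}p_{i}X_{i}^{\ast}|A_{i}|^{r}X_{i}$, and pulling the positive scalar $S^{\,r-1}$ out of each eigenvalue (i.e. $\lambda_{j}(cM)=c\,\lambda_{j}(M)$ for $c>0$) transforms the preceding display into the asserted inequality, with $S^{\,r-1}=\big(\sum_{i=1}^{\ell}p_{i}^{1/(1-r)}\big)^{r-1}$.

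There is no genuine obstacle in this argument; it is essentially bookkeeping once Corollary \ref{cornew} is in hand. The only point requiring a little care is the manipulation of the fractional exponents $1/(1-r)$ (note that $1-r<0$), in particular the identities $\big(p_{i}^{1/(1-r)}\big)^{1-r}=p_{i}$ and $\alpha_{i}^{\,1-r}=p_{i}S^{\,r-1}$, together with checking that $S>0$ so that both the normalization of the operator inequality and the extraction of the scalar from the eigenvalues are legitimate.
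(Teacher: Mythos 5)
Your proposal is correct and follows exactly the same route as the paper, which proves Theorem \ref{cor4.5} in one line by applying Corollary \ref{cornew} with $f(t)=|t|^{r}$ and $\alpha_{i}=p_{i}^{1/(1-r)}/\sum_{k=1}^{\ell}p_{k}^{1/(1-r)}$. You have simply written out the verification of the hypotheses and the computation $\alpha_{i}f(\alpha_{i}^{-1})=\alpha_{i}^{1-r}=p_{i}\bigl(\sum_{k}p_{k}^{1/(1-r)}\bigr)^{r-1}$ that the paper leaves implicit.
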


\begin{proof}
Apply Corollary \ref{cornew} to the function $f(t)=\vert t\vert^r$ and $%
\alpha _{i}=\displaystyle\frac{p_{i}^{1/1-r}}{\sum_{i=1}^{\ell
}p_{i}^{1/(1-r)}}$.
\end{proof}

%%%%%%%%%%%%%%%%%%%%%%%%%%%%%%%%%%%%%%%%%%%%%%%%%%%%%%%%%%%%%%%%%%%%%%%%%%%%%%%%%%%%%%%%%%%%%%%%%%%%%%%%%%%%%%%%%%%%%%%%%%%%%%%%%%%%%%%

\begin{corollary}
\label{zh} Let $A_{1},\cdots ,A_{\ell }\in \mathcal{H}_n$. Then
\begin{equation}
\left\vert \left\vert \left\vert ~\text{\ }\left\vert \sum_{i=1}^{\ell
}A_{i}\right\vert ^{r}\text{ }~\right\vert \right\vert \right\vert \leq
\left\vert \left\vert \left\vert ~\sum_{i=1}^{\ell }p_{i}^{-1}\vert
A_{i}\vert ^{r}~\right\vert \right\vert \right\vert  \label{1}
\end{equation}%
for $1<r\leq 2,$ $0 < p_{1},\cdots ,p_{\ell }\leq 1$ with $%
\sum_{i=1}^{\ell }p_{i}=1.$
\end{corollary}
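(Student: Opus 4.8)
The plan is to deduce Corollary \ref{zh} from Corollary \ref{cornew} by choosing the scalars $\alpha_i$ and the matrices $X_i$ appropriately, and then to pass from Ky Fan norms to arbitrary unitarily invariant norms via the Ky Fan dominance theorem.

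First I would apply Corollary \ref{cornew} with $f(t)=|t|^{r}$, with $X_i=I_n$ for all $i$, and with $\alpha_i=p_i^{1/(r-1)}$. For $r\ge 1$ the function $f(t)=|t|^{r}$ is convex on $\mathbb{R}$, satisfies $f(0)=0\le 0$, and is multiplicative, so in particular $f(uv)=|u|^{r}|v|^{r}=f(u)f(v)\le f(u)f(v)$; thus all the hypotheses on $f$ are met, and each $\alpha_i>0$. The only point that needs checking is the constraint $\sum_{i=1}^{\ell}\alpha_i X_i^{\ast}X_i\le I_n$, which with this choice reads $\bigl(\sum_{i=1}^{\ell}p_i^{1/(r-1)}\bigr)I_n\le I_n$, i.e.\ $\sum_{i=1}^{\ell}p_i^{1/(r-1)}\le 1$. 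Since $1<r\le 2$ we have $\tfrac{1}{r-1}\ge 1$, and since $0<p_i\le 1$ this forces $p_i^{1/(r-1)}\le p_i$ for each $i$; summing and using $\sum_{i=1}^{\ell}p_i=1$ gives the required estimate. This elementary exponent inequality is the crux of the argument and is precisely where the restriction $1<r\le 2$ (together with $0<p_i\le 1$ and $\sum p_i=1$) enters; everything else is bookkeeping.

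Next I would read off the conclusion of Corollary \ref{cornew}. With $X_i=I_n$ the left side of \eqref{in22} is $\sum_{j=1}^{k}\lambda_j\bigl(\,|\sum_{i=1}^{\ell}A_i|^{r}\bigr)$, while on the right side $\alpha_i f(\alpha_i^{-1})=\alpha_i\,\alpha_i^{-r}=\alpha_i^{1-r}=p_i^{-1}$ and $X_i^{\ast}f(A_i)X_i=|A_i|^{r}$; hence
\[
\sum_{j=1}^{k}\lambda_j\Bigl(\,\bigl|\sum_{i=1}^{\ell}A_i\bigr|^{r}\Bigr)\ \le\ \sum_{j=1}^{k}\lambda_j\Bigl(\sum_{i=1}^{\ell}p_i^{-1}|A_i|^{r}\Bigr)\qquad(1\le k\le n).
\]
Both matrices in this display are positive semidefinite, so their eigenvalues coincide with their singular values and these inequalities are exactly $\Vert\,|\sum_{i}A_i|^{r}\,\Vert_{(k)}\le\Vert\sum_{i}p_i^{-1}|A_i|^{r}\Vert_{(k)}$ for $k=1,\dots,n$. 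Invoking the Ky Fan dominance theorem then yields \eqref{1} for every unitarily invariant norm, which completes the proof. I do not anticipate any genuine obstacle beyond the exponent estimate isolated in the previous paragraph.
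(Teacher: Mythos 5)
Your proof is correct and follows essentially the same route as the paper: the paper applies Theorem \ref{cor4.5} (itself an instance of Corollary \ref{cornew} with $f(t)=|t|^{r}$) with $X_i=I_n$ and weights $p_i^{-1}$, so that the prefactor becomes $\bigl(\sum_i p_i^{1/(r-1)}\bigr)^{r-1}\le\bigl(\sum_i p_i\bigr)^{r-1}=1$, and then invokes Ky Fan dominance --- exactly the elementary exponent estimate and final step you use. Your only deviation is to bypass Theorem \ref{cor4.5} and feed the unnormalized $\alpha_i=p_i^{1/(r-1)}$ directly into Corollary \ref{cornew}, which is a cosmetic rather than substantive difference.
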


\begin{proof}
Taking $X_{i}=I_n,~1\leq i\leq \ell$ in Theorem \ref{cor4.5} and using that $%
\displaystyle\left( \sum_{i=1}^{\ell }p_{i}^{\frac{1}{r-1}}\right)
^{r-1}\leq \sum_{i=1}^{\ell }p_{i}=1,$ we have
\begin{equation}
\sum_{j=1}^{k}\lambda _{j}\left( \left\vert \sum_{i=1}^{\ell
}A_{i}\right\vert ^{r}\right) \leq \sum_{j=1}^{k}\lambda _{j}\left(
\sum_{i=1}^{\ell }p_{i}^{-1}\vert A_{i}\vert ^{r}\right)\qquad (1\leq k\leq
n).  \label{eqhere}
\end{equation}%
Now from (\ref{eqhere}) and the Ky Fan Dominance Theorem, it follows that
\begin{equation*}
\left\vert \left\vert \left\vert ~\text{\ }\left\vert \sum_{i=1}^{\ell
}A_{i}\right\vert ^{r}\text{ }~\right\vert \right\vert \right\vert \leq
\left\vert \left\vert \left\vert ~\sum_{i=1}^{\ell }p_{i}^{-1}\vert
A_{i}\vert ^{r}~\right\vert \right\vert \right\vert .\qedhere
\end{equation*}
\end{proof}

%%%%%%%%%%%%%%%%%%%%%%%%%%%%%%%%%%%%%%%%%%%%%%%%%%%%%%%%%%%%%%%%%%%%%%%%%%%%%%%%%%%%%%%%%%%%%%%%%%%%%%%%%%%%%%%%%%%%%%%%%%%%%%%%%%%%%%%

Next we show that the inequality (\ref{1}) can be improved when $A,B\in
\mathcal{M}_{n}$ in the case when $r\geq 2$.

\begin{lemma}[\protect\cite{as}]
\label{ausinew}Let $f$ be an increasing convex function on $J$. Then
\begin{equation*}
\lambda _{j}\left( f\left( \sum_{i=1}^{\ell }p_{i}A_{i}\right) \right) \leq
\lambda _{j}\left( \sum_{i=1}^{\ell }p_{i}f(A_{i})\right)\qquad (1\leq j\leq
n)
\end{equation*}%
for all $A_{1},\cdots ,A_{\ell }\in \mathcal{H}_{n}(J)$ and $0\leq
p_{1},\cdots ,p_{\ell }\leq 1$ such that $\displaystyle\sum_{i=1}^{%
\ell}p_{i}=1$.
\end{lemma}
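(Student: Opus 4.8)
The plan is to prove the inequality eigenvalue-by-eigenvalue using supporting lines of the convex function together with Weyl's monotonicity principle ($X\le Y\Rightarrow\lambda_j(X)\le\lambda_j(Y)$ for all $j$). Fix $j$ with $1\le j\le n$, put $C=\sum_{i=1}^{\ell}p_iA_i\in\mathcal{H}_n(J)$, and set $\mu_j=\lambda_j(C)\in J$. Since $f$ is convex on $J$, it has a supporting line at $\mu_j$: there are reals $a_j,b_j$ with $f(t)\ge a_jt+b_j$ for all $t\in J$ and $f(\mu_j)=a_j\mu_j+b_j$. The place where the hypothesis enters is that $f$ is \emph{increasing}, so one may take the slope $a_j\ge 0$ (the one-sided derivatives of an increasing convex function are nonnegative, so every subgradient is).

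First I would push the scalar inequality $f(t)\ge a_jt+b_j$ through the functional calculus: since each $A_i\in\mathcal{H}_n(J)$, we get $f(A_i)\ge a_jA_i+b_jI_n$. Multiplying by $p_i\ge 0$, summing, and using $\sum_i p_i=1$,
\[
\sum_{i=1}^{\ell}p_if(A_i)\ \ge\ a_j\sum_{i=1}^{\ell}p_iA_i+b_jI_n\ =\ a_jC+b_jI_n .
\]
Then, by monotonicity of eigenvalues under the Loewner order,
\[
\lambda_j\!\left(\sum_{i=1}^{\ell}p_if(A_i)\right)\ \ge\ \lambda_j\bigl(a_jC+b_jI_n\bigr)\ =\ a_j\lambda_j(C)+b_j ,
\]
where the equality uses $a_j\ge 0$ so that scaling by $a_j$ preserves the ordering of eigenvalues. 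Finally $a_j\lambda_j(C)+b_j=a_j\mu_j+b_j=f(\mu_j)=f(\lambda_j(C))$; and because $f$ is increasing while $\lambda_1(C)\ge\cdots\ge\lambda_n(C)$, the eigenvalues of $f(C)$ in decreasing order are exactly $f(\lambda_1(C))\ge\cdots\ge f(\lambda_n(C))$, hence $f(\lambda_j(C))=\lambda_j(f(C))=\lambda_j\bigl(f(\sum_i p_iA_i)\bigr)$. Chaining these gives the asserted inequality for each $j$.

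The only delicate points — and both are exactly where monotonicity of $f$ is needed — are (i) that the supporting slope can be chosen $a_j\ge 0$, which is used both to carry the Loewner inequality through $X\mapsto\lambda_j(aX+bI_n)$ and to keep eigenvalues in order, and (ii) that $\lambda_j(f(C))=f(\lambda_j(C))$ with no reshuffling of indices. If $f$ were merely convex, the argument would fail at precisely these two steps, consistent with the stronger hypothesis in the statement. Continuity of $f$ on the interior of $J$ — automatic for a convex function, and taken for granted at the endpoints under the usual convention — is all that is required for the functional calculus, so no further regularity assumptions are needed.
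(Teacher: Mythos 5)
The paper does not prove this lemma: it is imported verbatim from \cite{as} (Aujla--Silva), so there is no internal proof to measure you against. Your argument is correct and self-contained, and it is a clean route: a supporting line $f(t)\ge a_jt+b_j$ at $\mu_j=\lambda_j(\sum_ip_iA_i)$ with $a_j\ge 0$, pushed through the functional calculus and averaged to get $\sum_ip_if(A_i)\ge a_jC+b_jI_n$, then Weyl's monotonicity plus $\lambda_j(a_jC+b_jI_n)=a_j\lambda_j(C)+b_j$ and $\lambda_j(f(C))=f(\lambda_j(C))$. You correctly isolate the two places where monotonicity of $f$ is indispensable (nonnegativity of the subgradient, and no reshuffling of the eigenvalues of $f(C)$); for merely convex $f$ only the weak-majorization version survives, which is consistent with how the literature states these results. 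An alternative standard proof (closer in spirit to \cite{as}) runs through the Courant--Fischer/Ky Fan characterization: take the $(n-j+1)$-dimensional subspace $M$ spanned by the eigenvectors of $D=\sum_ip_if(A_i)$ for its $n-j+1$ smallest eigenvalues, and for unit $x\in M$ estimate $f(\langle Cx,x\rangle)\le\sum_ip_if(\langle A_ix,x\rangle)\le\langle Dx,x\rangle\le\lambda_j(D)$, then maximize over $x\in M$; your supporting-line version buys the same conclusion with only Weyl's principle as matrix-theoretic input. The single caveat is the existence of the supporting line: if $\mu_j$ is the right endpoint of $J$ and $f'_-(\mu_j)=+\infty$ (e.g.\ $f(t)=-\sqrt{1-t}$ on $[0,1]$), no finite subgradient exists there and your construction needs a limiting argument. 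This is a genuine but peripheral gap, irrelevant to every use of the lemma in this paper, where $f(t)=t^{r/2}$ on $[0,\infty)$ has a finite nonnegative subgradient at every point.
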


%%%%%%%%%%%%%%%%%%%%%%%%%%%%%%%%%%%%%%%%%%%%%%%%%%%%%%%%%%%%%%%%%%%%%%%%%%%%%%%%%%%%%%%%%%%%%%%%%%%%%%%%%%%%%%%%%%%%%%%%%%%%%%%%%%%%%%%

\begin{prop}
\label{thmfulll} Let $A_{1},\cdots ,A_{\ell }\in \mathcal{M}_{n}$ and $r\geq
2$. Then
\begin{equation}  \label{new2}
\lambda _{j}\left( \left\vert \sum_{i=1}^{\ell }A_{i}\right\vert ^{r}\right)
\leq \lambda _{j}\left( \sum_{i=1}^{\ell }p_{i}^{1-r}\left\vert
A_{i}\right\vert ^{r}\right) \qquad (1\leq j\leq
n)
\end{equation}
for all $0 <  p_{1},\cdots ,p_{\ell }\leq 1$ such that $\displaystyle%
\sum_{i=1}^{\ell}p_{i}=1$.
\end{prop}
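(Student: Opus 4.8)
The plan is to reduce the statement to the case $r=2$ --- where it is a genuine positive semidefinite operator inequality obtained from convexity of $v\mapsto\|v\|^{2}$ --- and then to pass to general $r\ge 2$ by means of the power function $f(t)=t^{r/2}$ and Lemma \ref{ausinew}. Write $S:=\sum_{i=1}^{\ell}A_{i}$ and $Y:=\sum_{i=1}^{\ell}p_{i}^{-1}|A_{i}|^{2}\ge 0$. First I would show $|S|^{2}\le Y$: since $\sum_{i}p_{i}=1$, for every $x\in\mathbb{C}^{n}$ the vector $Sx=\sum_{i}p_{i}\,(p_{i}^{-1}A_{i}x)$ is a convex combination, so
\begin{equation*}
\langle |S|^{2}x,x\rangle=\left\|\sum_{i=1}^{\ell}p_{i}\left(p_{i}^{-1}A_{i}x\right)\right\|^{2}\le\sum_{i=1}^{\ell}p_{i}\left\|p_{i}^{-1}A_{i}x\right\|^{2}=\sum_{i=1}^{\ell}p_{i}^{-1}\|A_{i}x\|^{2}=\langle Yx,x\rangle .
\end{equation*}
This is exactly the classical Bohr-type argument, and it already settles $r=2$ since then $Y=\sum_{i}p_{i}^{1-r}|A_{i}|^{2}$.

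For $r\ge 2$, I would next use that $0\le |S|^{2}\le Y$ implies, by Weyl's monotonicity theorem \cite{bhatiaa}, $\lambda_{j}(|S|^{2})\le\lambda_{j}(Y)$ for $1\le j\le n$; since $t\mapsto t^{r/2}$ is increasing on $[0,\infty)$ --- so that applying it to a positive semidefinite matrix preserves the decreasing ordering of eigenvalues --- this gives
\begin{equation*}
\lambda_{j}\left(|S|^{r}\right)=\left(\lambda_{j}(|S|^{2})\right)^{r/2}\le\left(\lambda_{j}(Y)\right)^{r/2}=\lambda_{j}\left(Y^{r/2}\right)\qquad(1\le j\le n).
\end{equation*}
Finally, put $f(t)=t^{r/2}$, which is increasing and convex on $J=[0,\infty)$ precisely because $r\ge 2$, and set $B_{i}:=p_{i}^{-2}|A_{i}|^{2}\in\mathcal{H}_{n}(J)$. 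Then $\sum_{i}p_{i}B_{i}=Y$ and $p_{i}B_{i}^{r/2}=p_{i}^{1-r}|A_{i}|^{r}$, so Lemma \ref{ausinew} applied to $B_{1},\dots,B_{\ell}$ with weights $p_{1},\dots,p_{\ell}$ yields
\begin{equation*}
\lambda_{j}\left(Y^{r/2}\right)=\lambda_{j}\left(\left(\sum_{i=1}^{\ell}p_{i}B_{i}\right)^{r/2}\right)\le\lambda_{j}\left(\sum_{i=1}^{\ell}p_{i}B_{i}^{r/2}\right)=\lambda_{j}\left(\sum_{i=1}^{\ell}p_{i}^{1-r}|A_{i}|^{r}\right).
\end{equation*}
Concatenating the last two displays proves \eqref{new2}.

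The step I expect to be the main obstacle is conceptual rather than computational: for $r>2$ the map $t\mapsto t^{r/2}$ fails to be operator monotone, so one cannot simply raise the operator inequality $|S|^{2}\le Y$ to the power $r/2$. The key observation is that only eigenvalue-wise monotonicity (Weyl's theorem) is needed in order to reach $\lambda_{j}(|S|^{r})\le\lambda_{j}(Y^{r/2})$, after which the genuinely noncommutative part of the power step is carried by Lemma \ref{ausinew}, which requires only ordinary convexity of $f$ and not operator convexity. One should also keep track of the exponent bookkeeping $p_{i}B_{i}^{r/2}=p_{i}\cdot p_{i}^{-r}|A_{i}|^{r}=p_{i}^{1-r}|A_{i}|^{r}$, which is what forces the weight $p_{i}^{1-r}$ (rather than the weight $p_{i}^{-1}$ of Corollary \ref{zh}) in the $r\ge 2$ regime.
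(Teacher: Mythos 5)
Your proof is correct and follows essentially the same route as the paper's: establish the operator inequality $|\sum_i p_i C_i|^2\le\sum_i p_i|C_i|^2$ (you derive it from convexity of $v\mapsto\|v\|^2$, the paper from $\sum_{i,j}p_ip_j(A_i-A_j)^*(A_i-A_j)\ge 0$, which is the same computation), then apply Weyl's monotonicity principle and Lemma \ref{ausinew} with $f(t)=t^{r/2}$. The only cosmetic difference is that you perform the substitution $A_i\mapsto A_i/p_i$ at the outset rather than at the end.
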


\begin{proof}
Clearly
\begin{equation}
\sum_{i,j=1}^{\ell }p_{i}p_{j}\left( A_{i}-A_{j}\right) ^{\ast
}(A_{i}-A_{j})\geq 0.  \label{ine33vasi}
\end{equation}%
It follows by a direct calculation that inequality%
\begin{equation}
\left\vert \sum_{j=1}^{\ell }p_{j}A_{j}\right\vert ^{2}\leq \sum_{j=1}^{\ell
}p_{j}|A_{j}|^{2}  \label{newv}
\end{equation}%
is equivalent to (\ref{ine33vasi}). Therefore \eqref{newv} holds. Due to the
function $f(t)=t^{r/2}$ is an increasing convex function, we have
\begin{align*}
\lambda _{j}\left( \left\vert \sum_{i=1}^{\ell }p_{i}A_{i}\right\vert
^{r}\right) & =\lambda _{j}^{r/2}\left( \left\vert \sum_{i=1}^{\ell
}p_{i}A_{i}\right\vert ^{2}\right) \\
& \leq \lambda _{j}^{r/2}\left( \sum_{i=1}^{\ell }p_{i}\left\vert
A_{i}\right\vert ^{2}\right) \\
& \quad (%
\mbox{by Weyl's monotonicity principal \cite[P. 63]{bhatiaa}
applied to \eqref{newv}}) \\
& =\lambda _{j}\left( \left( \sum_{i=1}^{\ell }p_{i}\left\vert
A_{i}\right\vert ^{2}\right) ^{r/2}\right) \\
& \leq \lambda _{j}\left( \sum_{i=1}^{\ell }p_{i}\left\vert A_{i}\right\vert
^{r}\right) \qquad \qquad \qquad \qquad \qquad \quad (%
\mbox{by Lemma
\ref{ausinew}})
\end{align*}%
for $1\leq j\leq n$. Now, we replace $A_{i}$ by $A_{i}/p_{i}$ to get %
\eqref{new2}.
\end{proof}

%%%%%%%%%%%%%%%%%%%%%%%%%%%%%%%%%%%%%%%%%%%%%%%%%%%%%%%%%%%%%%%%%%%%%%%%%%%%%%%%%%%%%%%%%%%%%%%%%%%%%%%%%%%%%%%%%%%%%%%%%%%%%%%%%%%%%%%

\begin{rem}
Corollary \ref{zh} and Proposition \ref{thmfulll} are generalizations of
\cite[Theorem 7]{ZHA}.
\end{rem}

%%%%%%%%%%%%%%%%%%%%%%%%%%%%%%%%%%%%%%%%%%%%%%%%%%%%%%%%%%%%%%%%%%%%%%%%%%%%%%%%%%%%%%%%%%%%%%%%%%%%%%%%%%%%%%%%%%%%%%%%%%%%%%%%%%%%%%%

\end{document}